\theoremstyle{plain}
\newtheorem{theo}{Theorem}[section]
\newtheorem{prop}[theo]{Proposition}
\newtheorem{lemm}[theo]{Lemma}
\newtheorem{coro}[theo]{Corollary}
\newtheorem*{maintheo}{Main Theorem}
\theoremstyle{definition}
\newtheorem{defi}[theo]{Definition}
\DeclareMathAlphabet{\mathrmsl}{OT1}{cmr}{m}{sl}
\renewcommand{\geq}{\geqslant}
\DeclareMathOperator{\Ric}{Ric}
\newcommand{\Scal}{\operatorname{Scal}}
\newcommand{\Ker}{\operatorname{Ker}}
\newcommand{\Ad}{\operatorname{Ad}}
\newcommand{\ad}{\operatorname{ad}}
\renewcommand{\geq}{\geqslant}
\renewcommand{\exp}{\operatorname{exp}}
\newcommand{\Range}{\operatorname{Range}}
\newcommand{\Int}{\operatorname{Int}}
\newcommand{\GG}{\mathscr{G}}
\newcommand{\lig}{\mathfrak{g}}
\newcommand{\lip}{\mathfrak{p}}
\newcommand{\grlig}{\operatorname{gr}(\mathfrak{g})}
\newcommand{\grTM}{\operatorname{gr}(TM)}
\newcommand{\proofof}[1]{\end{#1}\begin{proof}}
\newcounter{mnotecount}[section]
\renewcommand{\themnotecount}{\thesection.\arabic{mnotecount}}
\newcommand{\mnote}[1]%{}
{\protect{\stepcounter{mnotecount}}$^{\mbox{\footnotesize  $%\!\!\!\!\!\!\,
      \bullet$\themnotecount}}$ \marginpar{\raggedright\tiny\em
    $\!\!\!\!\!\!\,\bullet$\themnotecount: #1} }
\begin{document}
\title[Parallel curves in parabolic geometries]{Parabolic geodesics as parallel curves\\ in parabolic geometries}
\author{Marc Herzlich} 
\subjclass[2010]{53B25, 53A55, 53A30}
\keywords{Parabolic geometry, parabolic geodesics.}
\address{Institut de math\'ematiques et de mod\'elisation de Montpellier\\ CNRS \& Universit\'e Montpellier~2} 
\email{herzlich@math.univ-montp2.fr}
\date{July 17th, 2012}

\begin{abstract} 
We give a simple characterization of the parabolic geodesics introduced by \v{C}ap, Slov\'ak and \v{Z}\'adn\'{\i}k for all parabolic geometries. 
This goes through the definition of a natural connection on the space of Weyl structures. We then show that parabolic geodesics can be characterized as 
the following data: a curve on the manifold and a Weyl structure along the curve, so that the curve is a geodesic for its companion Weyl structure
and the Weyl structure is parallel along the curve and in the direction of the tangent vector of the curve.
\end{abstract}

\maketitle

\section*{Introduction}
Parabolic geometries have attracted much interest in the recent years, providing an efficient framework for 
tackling problems in various geometries as, \emph{e.g.}, those induced by conformal or Cauchy-Riemann (CR) structures. 
There exists curves in parabolic geometries (hereafter called parabolic geodesics) that play a similar role as 
geodesics in Riemannian geometry: for instance, they provide natural local charts adapted to the geometry at hand, 
see \cite{frances} for applications.

\smallskip

In conformal geometry, conformal geodesics may have been known to Cartan 
\cite{cartan-espaces-connexion-conforme} and their study has been revived in the 80's by Ferrand 
\cite{ferrand-geodesiques-conformes}. For CR structures, canonical curves, or chains, were defined
by Cartan \cite{cartan-dim3,cartan-dim3bis} in dimension $3$, and by Chern and Moser \cite{chern-moser} in any dimension. Another
definition was then introduced by Fefferman \cite{fefferman-monge-ampere} and more general curves were studied by Koch
\cite{Koch1,Koch2}. A general definition of distinguished curves for all parabolic geometries, or \emph{parabolic geodesics}, 
was then given by \v{C}ap, Slov\'ak, and \v{Z}\'adn\'{\i}k \cite{cap-slovak-book,cap-slovak-zadnik-curves}. 

\smallskip

However, and contrarily to geodesics, parabolic geodesics are not defined directly as solutions of a natural differential equation
on the manifold itself, but rather as projections of special curves on a natural 
principal bundle (the so-called Cartan bundle) over the base manifold. To the knowledge of the author, 
natural differential equations for parabolic geodesics have been derived only in the case of conformal or CR geometry 
\cite{bailey-eastwood-conformal-circles,chern-moser,fefferman-monge-ampere,ferrand-geodesiques-conformes,Koch1,Koch2}.
The goal of this short paper is then to provide such a simple definition. To achieve this, we shall need to study in some detail a special class
of connections in parabolic geometries: the Weyl structures, introduced by \v{C}ap and Slov\'ak 
\cite{cap-slovak} and independently by Calderbank, Diemer and Sou\v{c}ek \cite{dmjc-ricci}. The set of Weyl structures forms a bundle, and we shall 
show that this bundle admits a natural connection. In a second step, we shall prove the following

\begin{maintheo} Distinguished curves (parabolic geodesics) on a manifold endowed with a parabolic geometry are exactly the curves 
that satisfy the following requirements: 
\begin{itemize}
\item[(i)] they are parallel for a Weyl structure~;
\item[(ii)] this Weyl structure is itself parallel along the curve and in the direction of the tangent vector of the curve 
for the natural connection acting on the bundle of Weyl structures.
\end{itemize}
\end{maintheo}

The paper is organised as follows. Section 1 recalls the basic facts on parabolic geometries and their Weyl structures. Sections 2 and 3 yield 
the definition
of the natural connection on the bundle of Weyl structures. Our main result is proved in section 4. The paper ends in section 5 with a few
examples of computations, where we shall re-derive from our setting known equations in the most simple cases.

{\flushleft\emph{Acknowledgements}. -- This note emerged from a series of discussions with Paul Gauduchon and the author is happy to acknowledge his
influence on this work. He also thanks Olivier Biquard and Charles Francès for their interest and comments.}

\medskip

\section{Parabolic geometries}

\smallskip

We recall here a few basic facts on parabolic geometries. Our main references in this section are  \cite{dmjc-ricci,cap-slovak,cap-slovak-book}.
In all what follows, we shall denote by $M$ a connected manifold (of dimension $n$), by $G$ a (real) semi-simple Lie group, and 
by $P$ a closed parabolic subgroup of $G$, whose Lie algebras will be denoted by $\mathfrak{g}$ and $\mathfrak{p}$. 
This implies the existence of a filtration of $\mathfrak{g}$:
$$ \mathfrak{g} = \mathfrak{g}_{(-k)} \supset \mathfrak{g}_{(-k+1)} \supset \cdots \supset \mathfrak{g}_{(0)} = \mathfrak{p} \supset 
\mathfrak{g}_{(1)} \supset \cdots \supset \mathfrak{g}_{(k)} $$ 
for some positive integer $k$. Here one has $\mathfrak{g}_{(1)} = [\mathfrak{p},\mathfrak{p}]$ (thus $\lig_{(1)}$ is the maximal nilpotent 
ideal in $\lip$ and one has $\lig_{(1)} = \lip^{\perp}$ with respect to the Killing form of $\lig$), 
$\mathfrak{g}_{(2)} =  [\mathfrak{g}_{(1)},\mathfrak{g}_{(1)}]$, and the subsequent $\mathfrak{g}_{(j)}$ for $j>0$ are the elements of the descending
central series of $\mathfrak{g}_{(1)}$; for $j<0$, the terms of the filtration are defined by $\lig_{(j)} = (\lig_{(-j-1)})^{\perp}$.
The associated graded Lie algebra $\grlig$ factors as
$$ \grlig = \lig^{-k} \oplus \lig^{-k+1}  \oplus \cdots \oplus \lig^{0} \oplus \lig^{1}  \oplus \cdots \oplus  \lig^{k} $$
where $\lig^j = \lig_{(j)}/\lig_{(j+1)}$. Thus, $[\lig^{i},\lig^{j}]\subset\lig^{i+j}$ for any $i$
and $j$ in $\{-k,\ldots,k\}$. 

\smallskip

Any choice of a Cartan involution in $\lig$ induces a Lie algebra isomorphism between $\lig$ and $\grlig$. There is no canonical 
choice of a Cartan involution, but any two such choices are related by the adjoint action of an element of $P$.
For reasons that will be clear below, the choice of such an identification will be harmless for our purposes, hence we
shall assume that such a choice has been made once and for all. It will also be useful to consider the subalgebras 
$\lig^{+} = \lig^{0} \oplus \lig^{1}  \oplus \cdots \oplus  \lig^{k}$ and $\lig^{-} = \lig^{-k} \oplus \cdots \oplus \lig^{-1}  \oplus  \lig^{0}$
in $\lig$. The smaller subalgebras $\lig^{>0}$ and $\lig^{<0}$ are defined by removing the $0$-th factor in the above formulas. 
We shall denote by $P^0$ the closed subgroup of $P$ whose adjoint action preserves the grading of $\lig$ into the
$\lig^0$-modules $\lig^i$ ($i=-k,\ldots,k$); its Lie algebra is of course $\lig^0$. In the same vein, $P^{>0}=\exp(\lig^{>0})$ is
the connected subgroup of $P$ whose Lie algebra is $\lig^{>0}$. It is then clear that $P_0 = P/P^{>0}$ is isomorphic to $P^0$; this 
is the group known as the \emph{Levi factor}. 

\begin{defi}
A parabolic geometry of type $(G,P)$ on $M$ is given by a $P$-principal bundle $\mathscr{G} \rightarrow M$ and
a section $\omega$ (a \emph{Cartan connection}) of the bundle of $1$-forms on $\mathscr{G}$ with values in $\mathfrak{g}$ such that
\begin{itemize}
\item[(a)] $\omega$ is equivariant with respect to the natural action of $P$, \emph{i.e.}
$$ (R_p)^*\omega = \Ad (p^{-1})\circ \omega \quad \textrm{ for any } p\in P\, ;$$
\item[(b)] $\omega$ reproduces the Maurer-Cartan form of $P$ on vertical vector fields~;
\item[(c)] $\omega_e : T_e\mathscr{G}\rightarrow  \mathfrak{g}$ is an isomorphism at any $e$ in $\mathscr{G}$.
\end{itemize}
\end{defi}

Under a mild cohomological condition on the Lie algebras $\lig$ and $\lip$, the geometric structure can be entirely described 
at the bottom level, \emph{i.e}. on the manifold $M$ itself, as a filtration of the tangent bundle $TM$ of the form
$$ TM = T_{(-k)}M \supset \cdots \supset T_{(-1)}M \ ,$$
induced by the identification of $TM$ with the associated bundle $\GG\times_P \lig/\lip$, together with a reduction of the structure 
group of the associated graded bundle $\grTM$ to $P_0$. A fundamental theorem due to \v{C}ap-Schichl \cite{cap-schichl} 
(see also \cite{cap-slovak-book} for a presentation in book
form with a different proof) asserts that under another, similar, 
algebraic condition on the pair $(\lig,\lip)$, any such filtered structure on $TM$ which satisfies 
a simple compatibility condition between the Lie algebraic bracket of $\lig$ and the bracket of vector fields (this condition
is called regularity) induces a parabolic geometry in the sense given above. Moreover, the latter is unique up to isomorphism
if the Cartan connection is required to be \emph{normal}, another algebraic condition on the curvature of $\omega$. All classical examples 
of parabolic geometries (\emph{e.g}. conformal geometries of any signature, CR geometry, etc.) belong to this setting. Another example is
projective geometry (\emph{i.e.} the geometry induced by a projective class of torsion-free connections on the tangent bundle), with the 
special feature that the first cohomological condition allued to above is not satisfied in this case, hence the underlying geometric structure
does not reduce to a filtration of the tangent bundle and a reduction of the structure group of the induced graded bundle; apart from this, it 
however fits into the parabolic setting rather nicely, and we shall also discuss this example later.

\smallskip

The last tool for our construction will be a subgroup $P^-$ of $G$ whose Lie algebra is $\lig^-$. We shall moreover assume that there is such
a subgroup so that $P\cap P^- = P^0$. This is a mild algebraic condition which is satisfied in all classical cases; note that it is also
satisfied when all three groups $P$, $P^0$, and $P^-$ are connected.

\medskip

\section{Weyl structures}\label{sec:weyl-structures}

\smallskip

Similarly to the whole family of Levi-Civita connections of al metrics in a conformal class, very parabolic geometry admits 
a family of distinguished connections called \emph{Weyl structures}. There are different ways to define them, but 
the following, due to \v{C}ap and Slov\'ak \cite{cap-slovak}, will be the most useful here.

\begin{defi}
A Weyl structure is a reduction $\GG^0$ of the $P$-principal bundle $\GG$ to the structure group $P^0$. 
This can be equivalently described as a $P^0$-equivariant bundle map $\sigma : \GG_0\rightarrow \GG$, where $\GG_0 = \GG / P^{>0}$.
\end{defi}

Any Weyl structure induces a principal connection on $\GG$, whose construction
is done as follows~: at any point $e=\sigma(e_0)$ of the image of $\sigma$ in $\GG$, the tangent bundle $T\GG$ splits as
$$ T_e\GG = T^V_e\GG \oplus T^H_e\GG $$
where $T^V_e\GG$ is the vertical bundle and $ T^H_e\GG = (\omega_e)^{-1}( \lig^{<0} )$. Since $\omega$ is equivariant and $P^0$ stabilizes
every graded subspace $\lig^j$ in $\lig$, this splitting is invariant by the right action of $P^0$ on $\sigma(\GG_0)$ and it can be extended as 
an equivariant connection on $\GG$ by the right action. This of course yields connections on bundles associated to $\GG$ through
a representation of $P$, such as, \emph{e.g.}, the tangent or cotangent bundles of the base manifold $M$. We shall term it the \emph{Weyl connection}
induced by $\sigma$ to make it clear that it depends on the choice of $\sigma$.

\smallskip

An equivalent definition of Weyl structures has been given in \cite{dmjc-ricci}. Weyl structures are
defined there directly as special $P$-equivariant splittings of $T_e\GG$ compatible with the natural
filtration of $\lig$. It is an easy exercise to show that any such splitting is necessarily induced by a unique section of $\GG$ over
$\GG_0$, thus yielding the equivalence between the two definitions.

\smallskip

From now on, we shall make extensive use of bundles associated with the Cartan bundle $\GG$. We shall need the following notation: for any space $X$
endowed with a $P$-action, the bundle $\GG\times_P X$ is the quotient of the product space $\GG\times X$ by the 
equivalence relation $(e,x) \sim (ep, p^{-1}x)$ for all $p\in P$.
We denote by $[e,x]$ the class of $(e,x)$ in $\GG\times_P X$, and sections of $\GG\times_P X$ are given by $P$-equivariant maps from
$\GG$ to $X$. 

\smallskip

It is well known that reductions to $P^0$ of the structure group of a $P$-principal bundle are classified by sections of the associated
bundle 
$$\mathscr{W}=\GG\times_P P/P^0.$$ 
Indeed, given a reduction $\sigma$, one can build a map \th\ from $\GG$ to $P/P^0$ which 
is $P$-equivariant as follows: one begins by fixing $\textrm{\th}(\sigma(e_0)) = [1]_{P/P^0}$
at any point $\sigma(e_0)$ in the image of $\sigma$; note that here and henceforth, an element between brackets 
will always denote its class in the quotient space indicated as an index to the right bracket. 
One then extends \th\ equivariantly by letting
$$ \textrm{\th}(\sigma(e_0)p) = p^{-1} [1]_{P/P^0} = [p^{-1}]_{P/P^0} .$$ 
In other words, $\sigma$ induces a section $\tau = \left[ \sigma(e_0),[1]_{P/P^0}\right] = \left[ e, \textrm{\th}(e)\right]$ 
of the bundle $\mathscr{W}$.

\begin{defi}
The bundle $\mathscr{W}$ is called the \emph{bundle of Weyl structures} of the parabolic geometry.
\end{defi}

The space of sections of $\mathscr{W}$ admits a natural right action of 
the space of sections of the bundle in nilpotent groups $\GG_0\times_{P^0} P^{>0} = \GG\times_P P^{>0} $, where the action 
of $p$ in $P$ or in $P^{0}$ on an element $n$ of $P^{>0}$ is by inner automorphisms: $n\longmapsto \Int_p(n) = p n p^{-1}$. 
Hence our claim here is that there is a right action of the space of sections of $\GG\times_{\Int} P^{>0}$ on 
$\mathscr{W}=\GG\times_{L} P/P^0$ (where $L$ stands for the left
action). Indeed, a section of $\GG\times_{\Int} P^{>0}$, seen as an equivariant map
$n : \GG \rightarrow P^{>0}$, acts on the right on 
$$ \textrm{\th} : \GG \rightarrow P/P^{0} , \quad \textrm{\th}(e) = [p(e)]_{P/P^0} $$ 
to yield a map 
$$ \textrm{\th}\odot n : \GG \rightarrow P/P^{0} , \quad \left(\textrm{\th}\odot n\right)(e) = [n^{-1}(e)p(e)]_{P/P^0} $$
This last section of $\mathscr{W}$ corresponds to the Weyl structure $\sigma\cdot n$ where the right factor $n$ acts by
right multiplication on the map $\sigma : \GG_0 \rightarrow  \GG$. This description of the action of $\GG\times_P P^{>0}$ on 
the bundle of Weyl structures is equivalent to that given in \cite[\S 2.4--2.5]{dmjc-ricci}.

\smallskip

The bundle $\mathscr{W}$ is associated to the principal bundle $\GG$ through a $P$-action on $P/P^0$ which does not extend to a $G$-action. 
Since $\GG$ does not have a true connection but only a Cartan connection, it does not induce a natural connection on $\mathscr{W}$. Of course,
any Weyl structure induces a connection on $\mathscr{W}$, but since there is no natural choice of a Weyl structure this does not help for our purposes.
However, there is a special class of associated bundles that are naturally endowed with a canonical connection: these are the bundles induced from a 
$G$-action.

\medskip

\section{The natural connection on the space of Weyl structures}

\smallskip

Let $X$ be a manifold endowed with an action of the group $G$. The bundle $\GG\times_P X$ is then endowed with a natural 
connection, which can be defined as follows: $\GG\times_P X$ is a quotient 
of the product space $\GG\times X$ and the kernel of the differential of the projection map 
$\tilde{\pi} :\GG\times X \rightarrow \GG\times_P X $ at $(e,x)$ is the range of the injective map
$$ \lip \rightarrow T_{(e,x)}\left( \GG\times X \right)  = T_{e}\GG\times T_{x} X , \quad \xi \mapsto (\tilde{\xi}_e , - \breve{\xi}_x ) $$
where $\tilde{\xi}_e$ and $\breve{\xi}_x$ are the natural vectors induced by the actions: 
$$ \tilde{\xi}_e = \frac{d}{dt}\left( e\, \exp(t\xi)\right)_{|t=0}  \ \ \textrm{ and } \ 
\ \breve{\xi}_x = \frac{d}{dt}\left(\exp(t\xi)\, x\right)_{|t=0}. $$
Any element $\xi$ of $\lip$ yields a vector field $\tilde{\xi} = \omega^{-1}(\xi)$ on $\GG$, which coincides with the
previously defined $\tilde{\xi}$. Thus, the kernel of the projection map is also identified to the range of the map $\xi 
\mapsto (\omega_e^{-1}(\xi) , - \breve{\xi}_x)$ from $\lip$ to $T_{e}\GG\times T_{x}X$.

\begin{defi} 
The horizontal space $T^H\left(\GG\times_P X\right)$ is the projection through $\tilde{\pi}$ of
$T^H\left(\GG\times X\right) = \Range \left( \xi \in \lig \mapsto (\omega_e^{-1}(\xi) , - \breve{\xi}_x ) \right)$.
\end{defi}

Thus, the horizontal space $T^H\left(\GG\times_P X\right)$ is naturally isomorphic to the quotient space 
$T^H\left(\GG\times X\right) / \Ker \tilde{\pi}_*$. Its dimension equals that of $\lig / \lip$ hence that of $M$,
and it projects itself isomorphically to $TM$ through the map
$$ (\omega_e^{-1}(\xi) , - \breve{\xi}_x) \mapsto  \pi_* \left( \omega_e^{-1}(\xi) \right) $$ 
tangent to the bundle projection $\pi : \GG \rightarrow M$. The connection induced by this choice of horizontal space will be called
\emph{the canonical (tractor) connection of the space} $\GG\times_P X$.

\smallskip

We can now apply this construction to the case where $X = G/P^{-}$. The tangent space at $[g]_{G/P^-}$
to $X$ is the image of the map $\xi \mapsto \breve{\xi}_{[g]}$ already defined above.
By analogy with conformal geometry (where $G/P^{-}$ is the Moebius sphere up to a covering), we shall make the following definition.

\begin{defi} 
The bundle $\mathscr{S} = \GG\times_P G/P^{-}$ is called the \emph{Moebius bundle} of the parabolic geometry.
\end{defi} 

\begin{prop}
The bundle of Weyl structures $\mathscr{W}$ is naturally embedded as an open set in the Moebius bundle $\mathscr{S}$.
\end{prop}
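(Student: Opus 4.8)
The plan is to exhibit a $P$-equivariant open embedding of $P/P^0$ into $G/P^-$ at the level of the fibres, which will then induce the desired embedding of associated bundles. First I would observe that the inclusion $P\hookrightarrow G$ descends to a map $P/P^0\to G/P^-$, because $P^0 = P\cap P^-$ implies that two elements $p,p'\in P$ have the same image in $G/P^-$ if and only if $p^{-1}p'\in P\cap P^- = P^0$, i.e.\ if and only if they have the same image in $P/P^0$. This gives an injective map $\iota\colon P/P^0\to G/P^-$, and it is tautologically equivariant for the left $P$-action on the source and the (restriction of the left $G$-action to the) $P$-action on the target. Hence $\iota$ induces a well-defined injective bundle map $\mathscr{W}=\GG\times_P P/P^0\to \GG\times_P G/P^- = \mathscr{S}$ over $M$.

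Next I would check that $\iota$ is an open embedding, which is a statement purely about the homogeneous spaces. Since $\iota$ is $P$-equivariant and $P$ acts transitively on $P/P^0$, it suffices to check that $\iota$ is an immersion at the base point $[1]_{P/P^0}$, whose image is $[1]_{G/P^-}$; openness of the image then follows from equality of dimensions, $\dim P/P^0 = \dim\lig^{>0} = \dim\lig^{<0} = \dim G/P^-$, together with invariance of domain (or homogeneity). The differential at the base point is the natural map $\lig^{>0}\simeq \lip/\lig^0 \to \lig/\lig^- \simeq \lig^{<0}$ induced by the inclusion $\lip\hookrightarrow\lig$; using the grading decomposition $\lig = \lig^{<0}\oplus\lig^0\oplus\lig^{>0}$ one sees at once that the composite $\lig^{>0}\hookrightarrow\lig\twoheadrightarrow\lig/\lig^-$ is an isomorphism, so $\iota$ is a local diffeomorphism at $[1]$, hence everywhere by equivariance.

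Finally I would note that the bundle map $\mathscr{W}\to\mathscr{S}$ is fibrewise an open embedding modelled on $\iota$, since $\GG\times_P(-)$ carries open $P$-invariant subsets of the fibre to open subbundles; therefore $\mathscr{W}$ is realised as an open subbundle of $\mathscr{S}$. The main point requiring care is the first step, namely verifying that the fibres match up correctly under $P^0 = P\cap P^-$: one must be careful that the identification of the abstract fibre of $\mathscr{W}$ with $P/P^0$ is the one compatible with the $P$-action used to form the associated bundle, so that $\iota$ really is $P$-equivariant and the induced map on associated bundles is well defined; once this compatibility is pinned down, everything else is formal. It is worth remarking that this embedding is exactly what makes the canonical tractor connection on $\mathscr{S}$ restrict to a connection on the open subbundle $\mathscr{W}$, which is the natural connection on the bundle of Weyl structures alluded to above.
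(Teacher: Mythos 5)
Your proof is correct and follows essentially the same route as the paper's: both realise the embedding as the map $\left[e,[p]_{P/P^0}\right]\mapsto\left[e,[p]_{G/P^-}\right]$ induced by the natural $P$-equivariant inclusion of $P/P^0$ into $G/P^-$, injective precisely because $P\cap P^-=P^0$; you simply spell out the details (injectivity, equivariance, the dimension count and invariance of domain) that the paper dismisses as clear. One cosmetic slip: the identification should read $\lig/\lig^-\simeq\lig^{>0}$ rather than $\lig^{<0}$ (since $\lig^{<0}\subset\lig^-$ is killed by the quotient), but your subsequent assertion that the composite $\lig^{>0}\hookrightarrow\lig\twoheadrightarrow\lig/\lig^-$ is an isomorphism is the correct statement and is what actually carries the argument.
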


\begin{proof}
This is clear since one may send any element $\left[ e, [p]_{P/P^0} \right]$ of $\mathscr{W}$ to $\left[ e, [p]_{G/P^-} \right]$,
which is a well defined map from a $P$-bundle to another. The embedding is nothing but the natural embedding of 
$P/P^0$ in $G/P^-$ in each fiber, whose differential is obviously injective.
\end{proof}

\begin{coro}
The bundle of Weyl structures admits a natural tractor connection inherited from its embedding as an open set of the Moebius bundle.
\end{coro}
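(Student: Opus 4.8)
The plan is to observe that this is a formal consequence of the Proposition together with the construction of the canonical tractor connection recalled above. Recall that a connection on a fibre bundle $E\to M$ is the datum of a smooth subbundle $T^H E\subset TE$ which is complementary to the vertical subbundle $T^V E=\Ker (d\pi_E)$ and which is mapped isomorphically onto $TM$, fibrewise, by $d\pi_E$. This is precisely the structure that the canonical tractor connection puts on $\mathscr{S}=\GG\times_P G/P^-$ by the preceding Definition, through the horizontal subbundle obtained by pushing $\Range \big(\xi\in\lig\mapsto(\omega_e^{-1}(\xi),-\breve{\xi}_x)\big)$ forward by the quotient map $\tilde{\pi}$.

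First I would use the Proposition to view $\mathscr{W}$ as an open subset of $\mathscr{S}$, via the embedding $\left[e,[p]_{P/P^0}\right]\mapsto\left[e,[p]_{G/P^-}\right]$, which covers the identity of $M$. Since $\mathscr{W}$ is an \emph{open} submanifold of $\mathscr{S}$, one has $T\mathscr{W}=T\mathscr{S}|_{\mathscr{W}}$ and $T^V\mathscr{W}=T^V\mathscr{S}|_{\mathscr{W}}$, and the bundle projection of $\mathscr{W}$ to $M$ is the restriction of that of $\mathscr{S}$. I would then simply define
$$ T^H\mathscr{W} \;=\; T^H\mathscr{S}\big|_{\mathscr{W}}. $$
This is a smooth subbundle of $T\mathscr{W}$, and the two defining properties of a connection are inherited verbatim from $\mathscr{S}$: at each $w\in\mathscr{W}$ the subspace $T^H_w\mathscr{S}$ is complementary to $T^V_w\mathscr{S}=T^V_w\mathscr{W}$ inside $T_w\mathscr{S}=T_w\mathscr{W}$, and it is mapped isomorphically onto $T_{\pi(w)}M$ by the differential of the projection. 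Hence $T^H\mathscr{W}$ is a connection on $\mathscr{W}\to M$, and it is \emph{natural} (canonical, choice-free) because both the tractor connection on $\mathscr{S}$ and the embedding of $\mathscr{W}$ in $\mathscr{S}$ are.

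Unwinding the definitions, one sees that at a point $\left[e,[p]_{P/P^0}\right]$ of $\mathscr{W}$ the horizontal space is the image under the quotient map of $\Range\big(\xi\in\lig\mapsto(\omega_e^{-1}(\xi),-\breve{\xi}_{[p]_{G/P^-}})\big)$, i.e. it is given by literally the same formula as on $\mathscr{S}$, the only point being that the base point $[p]$ now lies in the open orbit $P/P^0\subset G/P^-$ while the infinitesimal vectors $\breve{\xi}$ are still computed for the $G$-action on $G/P^-$. I do not expect any genuine obstacle here; the only thing requiring (minor) care is to remember that ``connection'' is meant in the Ehresmann sense of a horizontal distribution, so that the claim reduces to the elementary fact that such a distribution restricts to any open subbundle, together with the observation --- immediate since it held on $\mathscr{S}$ --- that the restricted distribution still surjects onto $TM$.
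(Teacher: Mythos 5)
Your proposal is correct and matches the paper's (implicit) argument: the corollary is stated without proof precisely because, as you explain, an Ehresmann connection given by a horizontal distribution on $\mathscr{S}$ restricts verbatim to the open subbundle $\mathscr{W}$. Your unwinding of the horizontal space at a point of $\mathscr{W}$ is a correct and useful elaboration, but introduces nothing beyond what the Proposition and the definition of the canonical tractor connection already provide.
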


We shall now denote by $\mathscr{D}$ the tractor connection on Weyl structures. Choose now a (local) Weyl structure $\sigma$ (thought as
a $P^0$-equiva\-riant section $\sigma : \GG_0 \rightarrow \GG$), and a vector $X$ at a point $x$ in $M$. Then one may lift $X$ to a vector $\hat{X}_0$
in $\GG_0$ at a point $e_0$ in $\GG_0$, and the tractor derivative $ \mathscr{D}_X\sigma$ of $\sigma$ at $x$ in the direction of $X$ is given
by the following element of the cotangent bundle $T^*M = \GG \times_P \lig^{>0}$ 
$$ \mathscr{D}_X\sigma \, =\, \left[\, \sigma(e_0)\, , \textrm{proj}_{\lig^{>0}}\circ \omega_{\sigma(e_0)}(\sigma_*(\hat{X}_0))\, \right] .$$ 
With this in mind, we can now prove a helpful fact. We begin by recalling that any
choice of Weyl structure $\sigma : \GG_0 \rightarrow \GG$ induces a natural connection on $\GG$
obtained by choosing to split the tangent bundle of $\GG$ at any point in the image of $\sigma$
as the direct sum of the vertical subspace $\omega^{-1}(\lip)$ with the horizontal subspace 
$\omega^{-1}(\lig^{<0})$. Thus, it is given by the connection $1$-form
$\omega^{\sigma} = \omega^{+} = \mathrm{proj}^{\lip} \circ \omega$
where $\mathrm{proj}^{\lip}$ denotes the projection onto $\lip$ in $\lig$.

\smallskip

The Weyl connection is a $\GG_0$-connection: indeed, by
the very definition of $P^0$, the right $P^0$-action on $\GG$ preserves the decomposition of 
$T\GG = \omega^{-1}(\lig)$ into its graded components. Hence the connection preserves the
$P^0$-bundle given by the image of $\sigma$. Equivalently, the Weyl connection can be pulled
back to $\GG_0$, and the connection $1$-form associated to this connection on $\GG_0$ is
given by the projection $\omega^{0}= \mathrm{proj}^{\lig^0}\circ\omega $ onto $\lig^0$ of $\omega$.

\begin{lemm} \label{lemm-failure}
The tractor derivative of a Weyl structure $\sigma :\GG_0 \rightarrow \GG$
measures the failure of the Weyl structure to lift horizontal vectors in $\GG_0$ to horizontal vectors in $\GG$. 

More precisely, for any vector $Y$ in $TM$ and any horizontal lift $\hat{Y}_0$ in $\GG_0$ for the $\GG_0$-Weyl 
connection induced by $\sigma$, the vertical part of $\sigma_*(\hat{Y}_0)$ for the $\GG$-Weyl connection equals 
$\mathscr{D}_Y\sigma$.
\end{lemm}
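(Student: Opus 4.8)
The plan is to unwind both objects appearing in the statement directly from their definitions and check that they coincide. Fix $Y\in T_xM$ and let $\hat Y_0\in T_{e_0}\GG_0$ be its horizontal lift for the $\GG_0$-Weyl connection induced by $\sigma$; recall this means $\omega^0_{\sigma(e_0)}\bigl(\sigma_*(\hat Y_0)\bigr)=0$, where $\omega^0=\mathrm{proj}^{\lig^0}\circ\omega$ is the connection $1$-form of the Weyl connection pulled back to $\GG_0$. Now push forward by $\sigma$ and read off $\omega_{\sigma(e_0)}\bigl(\sigma_*(\hat Y_0)\bigr)\in\lig$. Using the grading $\lig=\lig^{<0}\oplus\lig^0\oplus\lig^{>0}$, the $\lig^0$-component vanishes by the horizontality hypothesis, so
$$\omega_{\sigma(e_0)}\bigl(\sigma_*(\hat Y_0)\bigr)\;=\;\mathrm{proj}_{\lig^{<0}}\circ\omega_{\sigma(e_0)}\bigl(\sigma_*(\hat Y_0)\bigr)\;+\;\mathrm{proj}_{\lig^{>0}}\circ\omega_{\sigma(e_0)}\bigl(\sigma_*(\hat Y_0)\bigr).$$
The first term lies in $\lig^{<0}$, hence $\omega^{-1}$ of it lies in the horizontal subspace $\omega^{-1}(\lig^{<0})$ of the $\GG$-Weyl connection associated to $\sigma$; the second term lies in $\lig^{>0}\subset\lip$, hence $\omega^{-1}$ of it is vertical for that same connection. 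Therefore the vertical part of $\sigma_*(\hat Y_0)$ for the $\GG$-Weyl connection is exactly $\omega_{\sigma(e_0)}^{-1}\bigl(\mathrm{proj}_{\lig^{>0}}\circ\omega_{\sigma(e_0)}(\sigma_*(\hat Y_0))\bigr)$.

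It remains to match this with $\mathscr{D}_Y\sigma$. By the formula recalled just before the lemma,
$$\mathscr{D}_Y\sigma\;=\;\bigl[\,\sigma(e_0),\;\mathrm{proj}_{\lig^{>0}}\circ\omega_{\sigma(e_0)}\bigl(\sigma_*(\hat Y_0)\bigr)\,\bigr]\;\in\;T^*M=\GG\times_P\lig^{>0},$$
where one is free to use any lift $\hat Y_0$ of $Y$; choosing the horizontal one above is legitimate. Thus $\mathscr{D}_Y\sigma$ is precisely the class in $\GG\times_P\lig^{>0}$ of the $\lig^{>0}$-valued quantity whose $\omega^{-1}$-image is the vertical part computed above. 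Since the vertical bundle of $\GG$ along $\sigma(\GG_0)$ is identified, via $\omega$ and then the natural identification of fundamental vector fields, with the associated bundle built from $\lip$ — and its $\lig^{>0}$-part with $T^*M=\GG\times_P\lig^{>0}$ — the two sides of the claimed identity are the same element. This completes the proof.

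I expect the only delicate point to be bookkeeping about which associated bundle the ``vertical part'' naturally lives in: the vertical space $T^V_{\sigma(e_0)}\GG$ is a priori identified with $\lip$ via $\omega$, but one must be careful that the $P^0$-equivariance makes the $\lig^{>0}$-component transform correctly so that collecting over the fiber yields a section of $\GG\times_P\lig^{>0}=T^*M$ rather than of some other associated bundle. This is where the hypothesis that $\hat Y_0$ is horizontal for the $\GG_0$-Weyl connection is used twice: once to kill the $\lig^0$-component (so that ``vertical part'' really means ``$\lig^{>0}$-part''), and once implicitly to ensure the lift is well adapted to $\sigma$ so that the equivariance works out; everything else is a direct translation through the definition of $T^H(\GG\times_P X)$ for $X=G/P^-$ and the observation that the $P/P^0 \hookrightarrow G/P^-$ embedding sends the $\lig^{<0}$-directions to honest base directions and the $\lig^{>0}$-directions to the fiber directions of $\mathscr W$.
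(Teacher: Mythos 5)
Your proof is correct and follows exactly the route the paper intends: the paper's own proof is just ``Obvious corollary of the remarks above,'' and your argument spells out precisely those remarks --- the graded decomposition of $\omega(\sigma_*(\hat Y_0))$, the vanishing of the $\lig^0$-component by horizontality in $\GG_0$, and the identification of the residual $\lig^{>0}$-part with $\mathscr{D}_Y\sigma$ via $T^*M=\GG\times_P\lig^{>0}$. Nothing further is needed.
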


\begin{proof}
Obvious corollary of the remarks above.
\end{proof}

\smallskip

Weyl structures whose tractor-derivative vanishes in every direction were called \emph{normal} by 
\v{C}ap and Slov\'ak \cite[Definition 5.1.12]{cap-slovak-book}.

%%%%%%%%%%%%%%%%%%%%%%%%%%%%%%%%%%%%%%%%%%%%%%%%%%%%%%%%%%%%%%%%%%%%%%%%%%%%%%%%%%%%%%%%%%%%%%%%%%%%%%%%%%%%%%%%%%%%%%%%%%%%%%%%%%%%%%%%%%%%%%%%

\medskip

\section{Parabolic geodesics as parallel curves for the natural connection}

\smallskip

We can now proceed to the main result of this note. Following \v{C}ap, Slov\'ak, and \v{Z}\'adn\'{\i}k \cite{cap-slovak-book,cap-slovak-zadnik-curves},
we recall the definition of parabolic geodesics.

\begin{defi}
For any $P^0$-invariant subset $\mathfrak{a}$ of $\lig^{<0}$, let 
$$\mathscr{U}^{\mathfrak{a}} = \{ \ \xi \in \lig\setminus\{0\} \ | \ \exists p \in P \textrm{ such that } \Ad_{p^{-1}}(\xi) \in \mathfrak{a} \ \}.$$
If $\mathfrak{a}=\lig^{<0}$, then we shall simply denote $\mathscr{U}=\mathscr{U}^{\lig^{<0}}$.
\end{defi}

We recall that a \emph{constant} vector field on $\GG$ is a section of $T\GG$ of the form $\tilde{\xi}= \omega^{-1}(\xi)$ where $\xi$ is a
fixed element of $\lig$. 

\begin{defi}
Let $\mathfrak{a}$ be a $P^0$-invariant subset of $\lig^{<0}$. A distinguished curve in a parabolic geometry or \emph{parabolic geodesic}
of type $\mathfrak{a}$ is the projection on $M$ of an integral curve in $\GG$ of a constant vector field $\tilde{\xi} = \omega^{-1}(\tilde{\xi})$,
where $\xi$ is an element of $\mathscr{U}^{\mathfrak{a}}$ (hence it is not zero).
\end{defi}

The most natural choice is $\mathfrak{a}=\lig^{<0}$. For simplicity's sake, we shall only consider this case here and omit the type in the notation.

\smallskip

This definition provides a very general setting encompassing all examples of natural curves used in the most studied examples
of parabolic geometries such as, \emph{e.g.}, conformal geodesics of Ferrand \cite{ferrand-geodesiques-conformes} in conformal geometries 
(of any signature), or chains \cite{cartan-dim3, cartan-dim3bis,fefferman-monge-ampere} or horizontal curves of Koch \cite{Koch1,Koch2} 
in CR geometry. For more details, we refer to \v{C}ap and Slov\'ak's book \cite{cap-slovak-book}.
We can now prove our main result.

\begin{theo} \label{theo-main}
Parabolic geodesics of general type are exactly the curves satisfying the following requirements: 
\begin{itemize}
\item[(i)] they are parallel for a Weyl structure~;
\item[(ii)] the Weyl structure itself is parallel, among the curve, in the direction of the tangent vector of the curve for the natural 
tractor connection of the bundle of Weyl structures.
\end{itemize}
\end{theo}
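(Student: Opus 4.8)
The plan is to translate both sides of the claimed equivalence into conditions on a lift of the curve to the Cartan bundle $\GG$, and then match them. Let $c$ be a curve in $M$, and suppose first that $c$ is a parabolic geodesic, so there is an integral curve $\hat{c}$ in $\GG$ of a constant vector field $\tilde{\xi}=\omega^{-1}(\xi)$ with $\xi\in\mathscr{U}$, projecting to $c$. By definition of $\mathscr{U}$ there is, at least pointwise, an element $p(t)\in P$ with $\Ad_{p(t)^{-1}}(\xi)\in\lig^{<0}$; using equivariance of $\omega$ this lets me replace $\hat c$ by a translate $e(t)=\hat{c}(t)\,p(t)$ along which $\omega(\dot e)$ lies in $\lig^{<0}$. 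I would first observe that the $P^0$-orbit of such a frame defines a Weyl structure $\sigma$ along $c$ (a reduction of $\GG|_c$ to $P^0$): the point is that $\lig^{<0}$ is $P^0$-invariant and $P\cap P^-=P^0$, so the ambiguity in $p(t)$ modulo the stabiliser is exactly $P^0$. Then $\omega(\dot e)\in\lig^{<0}$ says precisely that $\dot e$ is horizontal for the $\GG$-Weyl connection $\omega^\sigma=\mathrm{proj}^{\lip}\circ\omega$, i.e.\ $c$ is a geodesic (parallel curve) for $\sigma$ — that is condition (i). For condition (ii), I would invoke Lemma~\ref{lemm-failure}: the tractor derivative $\mathscr{D}_{\dot c}\sigma$ is the vertical part (for the $\GG$-Weyl connection) of $\sigma_*$ applied to the horizontal lift of $\dot c$; but that horizontal lift is, up to a vertical $P^0$-vector which $\sigma_*$ kills on $\GG_0$, exactly $\dot e$, whose $\GG$-Weyl-vertical part vanishes. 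Hence $\mathscr{D}_{\dot c}\sigma=0$, which is (ii).

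Conversely, suppose $c$ carries a Weyl structure $\sigma$ along it satisfying (i) and (ii). Pick a lift $\hat c(t)=\sigma(\hat c_0(t))$ of $c$ into the image of $\sigma$ in $\GG$, where $\hat c_0$ lifts $c$ to $\GG_0$. Condition (i), that $c$ is $\sigma$-parallel, means I may choose $\hat c_0$ to be $\GG_0$-Weyl-horizontal, i.e.\ $\omega^0(\dot{\hat c})=\mathrm{proj}^{\lig^0}\circ\omega(\dot{\hat c})=0$. Condition (ii), $\mathscr{D}_{\dot c}\sigma=0$, together with Lemma~\ref{lemm-failure}, says the $\GG$-Weyl-vertical part of $\dot{\hat c}=\sigma_*(\dot{\hat c_0})$ vanishes, i.e.\ $\mathrm{proj}^{\lip}\circ\omega(\dot{\hat c})=0$; combined with the previous line this upgrades to $\mathrm{proj}^{\lig^{>0}}\circ\omega(\dot{\hat c})=0$ as well, so in fact $\omega(\dot{\hat c})\in\lig^{<0}$ for all $t$. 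Now I would produce from $\hat c$ an integral curve of a genuinely constant vector field: set $\xi(t)=\omega_{\hat c(t)}(\dot{\hat c}(t))\in\lig^{<0}$ and seek a reparametrisation/translation $e(t)=\hat c(t)\,p(t)$ with $p(0)=1$, $p(t)\in P^0$ (so that we stay inside the Weyl reduction and $\omega(\dot e)$ remains in $\lig^{<0}$ by $P^0$-invariance), solving the ODE that makes $\omega(\dot e)$ a constant element $\xi_0\in\lig^{<0}$. This is an ODE in $P^0$ with smooth coefficients, hence locally solvable; since $\xi_0\in\lig^{<0}\subset\mathscr{U}$ and $\xi_0\neq 0$ (as $\dot c\neq 0$, the curve being a curve), $e(t)$ is an integral curve of $\omega^{-1}(\xi_0)$ through projecting to $c$, so $c$ is a parabolic geodesic.

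The step I expect to be the main obstacle is the passage, in both directions, between ``$\omega(\dot{\hat c})$ lies in $\lig^{<0}$ along the curve'' and ``$\hat c$ is, after a $P^0$-valued gauge transformation, an integral curve of a constant vector field with value in $\lig^{<0}$'': the forward inclusion in $\mathscr{U}$ is only pointwise, and one must check that the solvability of the governing ODE in $P^0$ (rather than in all of $P$) is guaranteed precisely by the $P^0$-invariance of $\lig^{<0}$ and by the hypothesis $P\cap P^-=P^0$, which pins down the stabiliser. I would handle this by writing the equation $\tfrac{d}{dt}\bigl(\Ad_{p(t)^{-1}}\xi(t)\bigr)=0$ explicitly, noting that $\Ad_{p^{-1}}$ preserves $\lig^{<0}$ for $p\in P^0$, and arguing that the resulting first-order system for $p(t)$ has a unique local solution; the initial condition $p(0)=1$ then guarantees it stays in the identity component, and $P^0$-equivariance of $\omega$ closes the argument. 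A secondary point to be careful about is the identification in Lemma~\ref{lemm-failure} of the horizontal lift in $\GG_0$ with $\dot{\hat c_0}$ up to a $\lig^0$-vertical term: one must check this extra term maps under $\sigma_*$ into the $\GG$-Weyl-vertical subspace $\omega^{-1}(\lip)$, which it does since $\sigma$ is $P^0$-equivariant and $\lig^0\subset\lip$, so it does not interfere with the vanishing of the $\lig^{>0}$-component.
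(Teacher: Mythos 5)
Your overall architecture (lift to $\GG$, read off geodesy and tractor-parallelism from the $\lig^{<0}$- and $\lip$-components of $\omega$ along the lift, use Lemma \ref{lemm-failure}) is the paper's, and your forward direction is essentially sound — though note that no time-dependent $p(t)$ is needed there: $\xi$ is a single fixed element of $\lig$, so one constant $p\in P$ with $\Ad_{p^{-1}}(\xi)\in\lig^{<0}$ conjugates the whole lift at once, and $\hat c\,p$ is again an integral curve of a constant vector field. The ``pointwise'' worry you flag is a red herring in that direction.

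The converse, however, has a genuine gap. You extract from condition (i) only that a $\GG_0$-horizontal lift $\hat c_0$ exists — but every curve has a horizontal lift; the actual content of the geodesic equation $D_{\dot c}\dot c=0$ is that the equivariant function $\omega^{<0}(\dot{\hat c}_0)$ representing $\dot c$ is \emph{constant} along that horizontal lift. You drop this, and then try to manufacture constancy afterwards by a gauge transformation $e(t)=\hat c(t)p(t)$ with $p(t)\in P^0$ solving an ODE. That repair cannot work: for non-constant $p(t)$ one has
\begin{equation*}
\omega(\dot e(t))\;=\;\Ad_{p(t)^{-1}}\bigl(\omega(\dot{\hat c}(t))\bigr)\;+\;\delta p(t),\qquad \delta p(t)=\bigl(L_{p(t)^{-1}}\bigr)_*\dot p(t)\in\lig^0,
\end{equation*}
so the logarithmic-derivative term puts a nonzero $\lig^0$-component into $\omega(\dot e)$ and destroys the containment in $\lig^{<0}$; requiring $\omega(\dot e)$ to be a constant element of $\lig^{<0}$ forces $\dot p\equiv 0$, at which point the gauge transformation accomplishes nothing and you are back to needing $\omega(\dot{\hat c})$ constant from the start. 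The correct route is the paper's: use (i) directly to conclude that $\omega^{<0}(\dot{\hat c}_0)$ is constant along the horizontal lift, then use (ii) via Lemma \ref{lemm-failure} to kill the $\lip$-part of $\omega(\dot{\hat c})$ for $\hat c=\sigma\circ\hat c_0$, so that $\omega(\dot{\hat c})$ is already a constant element of $\lig^{<0}$ with no further modification.
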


\begin{proof}
Let $\gamma$ be a parabolic geodesic in the sense of \v{C}ap and Slov\'ak, and denote by $\hat{\gamma}$ a lift on $\GG$ that is an integral curve
of a constant vector field $\tilde{\xi}= \omega^{-1}(\xi)$, for a fixed $\xi$ in $\lig$. This immediately defines a Weyl structure along $\gamma$: 
indeed, $\hat{\gamma}$ projects itself onto a curve $\hat{\gamma}_0$ in $\GG_0$, and one may define a section 
$\sigma : \GG_0{}_{|\gamma} \rightarrow \GG_{|\gamma}$ as 
follows: for any $e_0$ in $\GG_0{}_{|\gamma(s)}$, there is $p^0$ in $P^0$ such that $e_0 = \hat{\gamma}_0(s) p^0$, and one sets 
$\sigma (e_0) = \hat{\gamma}(s)p^0$. As an example, we have $\sigma(\hat{\gamma}_0(s)) = \hat{\gamma}(s)$. 

\smallskip

One now needs to slightly change the lift: as $\xi$ is not zero and in $\mathscr{U}$, one may choose $p$ in $P$ such that $\Ad_{p^{-1}}(\xi)$ belongs to
$\lig^{<0}\setminus\{0\}$. Thus $\hat{\gamma}p$ is another lift of $\gamma$ which is an integral curve of $\Ad_{p^{-1}}(\xi)$. This shows that 
one may always suppose that $\xi$ belongs to $\lig^{<0}$. We shall now make this choice and denote by $\hat{\gamma}$ a lift of $\gamma$ satisfying this
extra property, and use the Weyl structure induced by this choice of the lift.
 
\smallskip

At $\hat{\gamma}(s)$, the horizontal space in $\GG$ for the so defined Weyl structure is $\omega_{\hat{\gamma}(s)}^{-1}(\lig^{<0})$. Recall now
that any vector on $M$ is an element of $\GG\times_P \lig/\lip$: more precisely, any $[e,\bar{\eta}]$ in $\GG\times_P \lig/\lip$ corresponds
to a vector
$$ X = \pi_* \circ \omega_e^{-1}( \eta ), $$
where $\eta$ is any lift of $\bar{\eta}$ in $\lig$. Thus, the vector field $\gamma'(s)$ may be thought as $\left[ \hat{\gamma}(s), [\xi]_{\lig/\lip}
\right]$
and its covariant derivative in its own direction with respect to the previously defined Weyl structure is given by the usual computation
$$\left[ \hat{\gamma}(s), \ad (\theta(\hat{\gamma}'(s))) ([\xi]_{\lig/\lip} \right] , $$
where $\theta$ is the connection $1$-form of the Weyl structure. But since $\omega(\hat{\gamma}')$ belongs to the space $\lig^{<0}$, the curve 
$\hat{\gamma}$ is an horizontal curve in $\GG$ and this implies that $\theta(\hat{\gamma}'(s)) = 0$. Thus the curve $\gamma$ on $M$ is a geodesic
for this choice of Weyl structure.

\smallskip

We now prove that the Weyl structure itself, seen as a section of $\mathscr{W}$, is parallel in the direction of $\gamma'$ with respect to the
tractor connection. Along $\gamma$, the section $\tau$ associated to the Weyl structure is given by
$$ \tau_{|\gamma} (\gamma(s)) = \left[ \hat{\gamma}(s) \, , \, [1]_{P/P^0} \, \right] ,$$
or, seen as a section of the Moebius bundle $\GG\times_P G/P^{-}$, 
$$ \tau_{|\gamma} (\gamma(s)) = \left[ \hat{\gamma}(s) \, , \, [1]_{G/P^{-}} \, \right] .$$
Lifting this curve to  $\GG\times G/P^{-}$, we can compute its tangent vector:
$$ (\hat{\gamma}'(s) \, , \, 0 ) = ( \omega_e^{-1}(\xi), 0).$$
Since $\xi \in \lig^{<0}$, 
$$ \frac{d}{dt} \left[ \exp(t \xi) 1 \right]_{G/P^-} = 0 $$
and it follows that $\tau$ is a horizontal section along $\gamma$.

\smallskip

Conversely, let $\gamma$ be a curve on $M$ such that there exists a Weyl structure for which the curve $\gamma$ is a geodesic and which 
is tractor-parallel along $\gamma$ in the direction of $\gamma'$.
Let $\hat{\gamma}_0(s)$ any local horizontal lift of $\gamma(s)$ to $\GG_0$. From the very definition of the Weyl
connection on $\GG_0$ (see above), the fact that $\hat{\gamma}_0'(s)$ is horizontal in $\GG_0$ is equivalent to the fact 
that $\omega^0\left(\hat{\gamma}_0'(s)\right)=0$, and since $\gamma'(s)$ is represented in 
$TM = \GG_0\times_{P^0}\lig^{<0}= \GG_0\times_{P^0}\lig/\lip$ by 
$$ \gamma'(s)\, = \, [e_0,\omega^{<0}\left(\hat{\gamma}_0'(s)\right) + \lip],$$ 
one deduces that $\omega^{<0}\left(\hat{\gamma}_0'(s)\right)$ is constant.

\smallskip

We now define $\hat{\gamma}(s) = \sigma \circ \hat{\gamma}_0(s)$. This is a lift of $\gamma(s)$ on $\GG$, and
Lemma \ref{lemm-failure} shows that $\hat{\gamma}'(s)$ is horizontal for the Weyl connection. Thus, 
$\omega^+\left((\hat{\gamma}'(s)\right)$ vanishes, and 
$$  \omega\left((\hat{\gamma}'(s)\right) = \omega^{<0}\left((\hat{\gamma}'(s)\right) $$
is a constant element of $\lig$. Consequently, $\gamma(s)$ is a parabolic geodesic.
\end{proof}

\medskip

%%%%%%%%%%%%%%%%%%%%%%%%%%%%%%%%%%%%%%%%%%%%%%%%%%%%%%%%%%%%%%%%%%%%%%%%%%%%%%%%%%%%%%%%%%%%%%%%%%%%%%%%%%%%%%%%%%%%%%%%%%%%%%%%%%%%%%

\section{Examples}

\smallskip

We shall consider here three classical examples which historically served as compelling motivations for the study of 
general parabolic geometries: (pseudo-)conformal geometry, projective clas\-ses of torsion free connections, 
and Cauchy-Riemann structures. Of course the results are already known in all these cases, but this is precisely the 
reason why they may be considered as interesting settings to test the result of this note. Moreover, one may choose the
group $G$ to be a linear group in those three cases, and this simplifies a lot the task of computing the Cartan 
connection as one may use the natural representation to shift from the principal bundle viewpoint into a vector bundle
approach which is easier to cope with.

\smallskip

\subsection{Conformal geodesics} These are the oldest known examples of distiguished curves in a parabolic geometry. 
They appear in the 1982 paper by Ferrand \cite{ferrand-geodesiques-conformes}, 
but Bailey and Eastwood \cite{bailey-eastwood-conformal-circles} mention that they might have been known earlier to Penrose
\cite{penrose-rindler-1,penrose-rindler-2}.
Some authors moreover claim that they are not-so-hidden in Cartan's works on conformal geometry. 
We shall here show that our approach gives the usual equations for conformal geodesics as given by Bailey and Eastwod 
\cite{bailey-eastwood-conformal-circles}, Ferrand \cite{ferrand-invariant-geodesiques-conformes}, or Gauduchon \cite{pg-cartan}. 

\smallskip

If $(M,[g])$ is a conformal manifold of signature $(p,q)$ ($n=p+q\geq 3$), one may take $G=O(p+1,q+1)$ and $P$ is the isotropy subgroup 
of an isotropic ray (half-line) in $\mathbb{R}^{p+q+2}$. Thus $\lig = \lig^{-1} \oplus  \lig^0 \oplus  \lig^{1}$ where 
$\lig^{-1}\simeq \mathbb{R}^n$, $\lig^0\simeq \mathfrak{co}(p,q)$, $\lig^1\simeq (\mathbb{R}^n)^*$ and the subgroup $P^0$ of $P$
preserving the grading is $P^0=CO(p,q)=\mathbb{R}_{>0}\times O(p,q)$.

\smallskip

The Moebius bundle is more easily identified with the help of the so-called tractor or Cartan bundle \cite{beg,cap-slovak,pg-cartan}, and
the existence of such a bundle motivated our choice of groups, see Graham and Willse \cite{graham-willse-subtleties} 
for subtleties in this issue. The tractor bundle is the vector bundle $\GG\times_P \mathbb{R}^{p+q+2}$.
As it is induced by the (faithful) standard representation of $G$, the tractor connection may rather be defined on
this vector bundle rather than on $\GG$. For this purpose, we notice that there is a natural filtration
$$ 0 \subset  \Lambda \subset \Lambda^{\perp} \subset \mathbb{T} ,$$
where $\Lambda$ is an oriented isotropic line bundle and $\Lambda^{\perp}$ is its orthogonal subspace for the
natural pseudo-Riemannian metric of $\mathbb{T}$. Any choice of 
metric $g$ in $[g]$ induces a Weyl structure, which turns this filtration into a graduation
$$ \mathbb{T} \simeq_g \mathbf{1} \oplus T^*M \oplus \mathbf{1} ,$$
where each $\mathbf{1}$ denotes a trivial line bundle, the first one being nothing but the already introduced $\Lambda$. 
Under this isomorphism, the tractor connection on a section
$u = (\lambda, \alpha,\mu)$ of $\mathbb{T}$ is explicitly computed as
$$ \mathscr{D}_Xu \ = \ \begin{pmatrix}
d\lambda(X) + S^g(\alpha^{\sharp},X)  \\
D^g_X\alpha - \lambda\, g(X,\cdot) + \mu\, S^g(X,\cdot) \\
d\mu(X) - \alpha(X) 
\end{pmatrix}\ ,$$
where $S^g = \tfrac{1}{n-2}\left(\Ric^g - \tfrac{\Scal^g}{2(n-1)} g\right)$ is the Schouten tensor (sometimes called the Rho-tensor) 
of the metric $g$ and $X$ is any tangent vector. The bundle $\mathbb{T}$ is endowed with a pseudo-Riemannian metric of 
signature $(p+1,q+1)$ given by
$\mathbf{H}(u,u) = g^{-1}(\alpha,\alpha) - 2 \lambda\,\mu$
and the $P$-bundle $\GG$ is the the subbundle of the orthonormal frame bundle of $\mathbb{T}$ whose elements are isometries
from $\mathbb{R}^{p+q+2}$ to $\mathbb{T}$ sending a fixed isotropic ray $\mathbb{R}_{>0}e_0$ in $\mathbb{R}^{p+q+2}$
to the ray $\Lambda_{>0}$.

\smallskip

One may now take $P^-$ to be the subgroup of $G=O(p+1,q+1)$ fixing an isotropic ray $\mathbb{R}_{>0}e_{\infty}$, where
$e_{\infty}$ is any vector in $\mathbb{R}^{p+q+2}$ such that $\langle e_0,e_{\infty}\rangle = 1$. 
One has then $P\cap P^- = P^0$ as requested and one concludes that the Moebius bundle $\mathscr{S} = \GG\times_P G/P^{-}$ is the bundle of 
isotropic rays in $\mathbb{T}$.

\smallskip

When the signature is Riemannian ($q=0$), each fiber of the Moebius bundle consists in two copies of the Moebius sphere
$\mathbb{S}^{n}$, seen as the set of isotropic lines in $\mathbb{R}^{n+2}$ with Lorentzian signature $(+\cdots+-)$. 
The bundle of Weyl structures $\mathscr{W}$ 
is then identified to $\mathscr{S}^+\setminus \{\Lambda_{>0}\}$, where $\mathscr{S}^+$ is the connected component of $\mathscr{S}$
containing $\Lambda_{>0}$. 
For arbitrary $p$ and $q$, each fiber of $\mathscr{W}$ is the 
image of the embedding of $T^*M$ into $\mathscr{S}$ defined by $\beta \longmapsto \mathbb{R}\,(\tfrac12g^{-1}(\beta,\beta), \beta, 1)$,
which reproduces the usual conformal embbeding of the flat pseudo-Riemannian space $\mathbb{R}^{p+q}$ of signature $(p,q)$ into the set
of isotropic rays of the space $\mathbb{R}^{p+q+2}$ with signature $(p+1,q+1)$. This embedding will also be used below to compute explicitly 
the tractor derivative of a Weyl structure, as it indentifies the cotangent bundle $T^*M$ to the vertical tangent space 
$\GG\times_P (\lig/\lip^-)=\GG\times_P \lig^{>0}$ of the Moebius bundle.

\smallskip

Weyl structures may be retrieved in the following way: a choice of metric $g$ fixes a $P^0$-subbundle (hence a background Weyl
structure) given by the frames adapted to the splitting $\mathbb{T} \simeq_g \left(\mathbf{1}\oplus \mathbf{1}\right) \stackrel{\perp}{\oplus} T^*M$ 
where the trivial line factors are the same as the ones given above. 
Any other Weyl structure is induced in this trivialization by the choice of a field of $1$-forms $\alpha$. 
This yields another splitting $\mathbb{T} \simeq \left(\mathbf{1}\oplus \mathbb{R}w\right) \stackrel{\perp}{\oplus} T^*M$
where the first $\mathbf{1}$ is again the trivial line bundle $\Lambda$ but the second trivial line is now generated by 
$$ w = (\tfrac12g^{-1}(\alpha,\alpha), -\alpha, 1) $$ 
where $\alpha$ belongs to $T^*M$ 
(the sign in front of the second factor being chosen here to simplify
further computations). The bundle of orthonormal frames adapted to this decomposition is again
a $P^0$-subbundle of $\GG$ which is the image in $\GG$ of a new Weyl structure $\sigma : \GG_0 \rightarrow \GG$. 

\smallskip

To make the picture complete, we note 
that a field of $1$-forms may be seen as a section of the bundle in nilpotent groups $\GG\times_P P^{>0}$\,: the 
filtration of the Lie algebra $\lig$ has depth $1$ in the conformal case, which means that one has 
$ \lig = \lig^{-1} \oplus \lig^0 \oplus \lig^1$ and $\lig^1$ is the abelian algebra
$\mathbb{R}^{p+q}$. Hence,  $\GG\times_P P^{>0}$ is nothing but the bundle in abelian groups $T^*M = \GG\times_P \lig^{1}$
which we already know as a vector bundle. As a result, we recover the simply transitive action of the bundle of nilpotent 
groups on the bundle of Weyl structures discussed in section \ref{sec:weyl-structures}. In the conformal case, this action turns 
$\mathscr{W}$ into an bundle in affine spaces whose associated vector bundle is $T^*M$.

\smallskip

It now remains to compute explicitly the Weyl connection on the tangent bundle, but this is well-known. Starting with a 
metric $g$ in the conformal class, the Weyl connection induced by the Weyl structure translated from $g$ by a $1$-form $\alpha$ is
$$D^{\alpha}_X Y = D^g_X Y + \alpha(X)Y + \alpha(Y)X - g(X,Y)\,\alpha^{\sharp} $$ 
(if $\alpha$ had been chosen above rather than $-\alpha$ in the formula yielding the embedding of $T^*M$ into $\mathscr{S}$, then
all occurences of $\alpha$ here should be changed to $-\alpha$).
Thus one may fix a backround Riemannian metric $g_0$ (and its Levi-Civita connection $D^0$), and 
using either the fact that the projection into the $\lig^{>0}$-part of the Cartan connection
of conformal geomery is the Schouten tensor or the formulas given above for the tractor connection on $\mathbb{T}$, 
one easily deduces from the definition of the tractor connection
given above that the equations for a conformal geodesic are
$$ \begin{cases}
0 = D^{\alpha}_{\gamma'}\gamma' = D^0_{\gamma'}\gamma' + 2 \alpha(\gamma')\gamma' - g_0(\gamma',\gamma')\,\alpha^{\sharp}\, , \\
0 = S^{D^{\alpha}}(\gamma',\cdot) = S^{g_0}(\gamma',\cdot) - D^0_{\gamma'}\alpha - \frac12 g^{-1}(\alpha, \alpha)(\gamma')^{\flat} + \alpha(\gamma')\alpha \, .
\end{cases}$$
It is easily checked that either $g_0(\gamma',\gamma')=0$ at time $s=0$ and it remains so along the solution curve, or $g_0(\gamma',\gamma')$ 
never vanishes. In the first case, the first equation is turned into $D^0_{\gamma'}\gamma' + 2 \alpha(\gamma')\gamma' = 0$,
which is the (unparametrised) geodesic equation for any Weyl structure, and one recovers the set of null geodesic curves 
(which forms a conformally invariant set of curves).
In the second case, one may always solve the first equation for $\alpha$ and get
$$\alpha = \frac{1}{g_0(\gamma',\gamma')}\,\left( \left(D^0_{\gamma'}\gamma'\right)^{\flat} 
- 2 \frac{g_0(D^0_{\gamma'}\gamma',\gamma')}{g_0(\gamma',\gamma')}\,(\gamma')^{\flat}\right) .$$ 
Thus, the parabolic geodesics are exactly the solutions of the following well-known third-order equation 
\cite{bailey-eastwood-conformal-circles,ferrand-geodesiques-conformes,pg-cartan}
$$ \gamma''' + \frac32\, \frac{|\gamma''|^2_{g_0}}{|\gamma'|^2_{g_0}}\gamma' - 3\, \frac{g_0(\gamma'',\gamma')}{|\gamma'|^2_{g_0}}\gamma'' 
- |\gamma'|^2_{g_0}S^{g_0}(\gamma',\cdot)^{\sharp} + S^{g_0}(\gamma',\gamma')\gamma'\ = \ 0\, ,  $$
where we have of course denoted $D^0_{\gamma'}\gamma'=\gamma''$ and $D^0_{\gamma'}\left(D^0_{\gamma'}\gamma'\right)=\gamma'''$.

\smallskip

\subsection{Great circles in projective geometry}
We now consider the geometric structure given by a projective class of linear connections, \emph{i.e.}
a class of torsion-free connections on $TM$ sharing the same family of unparametrized geodesics. Of
course, this example is in a sense vacuous for our purposes as we expect that the family of
(unparametrized) parabolic geodesics will be the same as the common family of geodesics for all connections 
in the projective class. It is however interesting to see how they explicitly appear in the computations.

\smallskip

Following for instance \cite{beg} or \cite[chapter 4]{cap-slovak-book}, we may now take the group $G=SL(n+1,\mathbb{R})$ and the isotropy subgroup $P$ of a 
fixed ray ($\mathbb{R}_{>0}e_0$, say) in $\mathbb{R}^{n+1}$, where $n$ is the dimension of the underlying manifold. The Lie algebra $\lig$ splits as
$\lig = \lig^{-1} \oplus  \lig^0 \oplus  \lig^{1}$ where one has $\lig^{-1}\simeq \mathbb{R}^n$, $\lig^0\simeq \mathfrak{gl}(n,\mathbb{R})$ and
$\lig^1\simeq (\mathbb{R}^n)^*$. The group $P^0$ preserving the grading is isomorphic to $GL^+(n,\mathbb{R})$. Fixing a basis 
$(e_0,\ldots,e_n)$ of $\mathbb{R}^{n+1}$, one takes $P^-$ to be the subgroup of $G$ preserving the hyperplane generated by $(e_1,\ldots,e_n)$
together with the orientation induced by the $n$-form induced from the determinant of $\mathbb{R}^{n+1}$ by the choice of $e_0$.

\smallskip

There is again a tractor bundle $\mathbb{T}= \GG\times_P \mathbb{R}^{n+1}$, which contains a natural oriented line bundle $\Lambda  \subset \mathbb{T}$,
see \cite{beg}. Choosing a Weyl structure amounts to choosing an hyperplane in $\mathbb{T}$ transverse to the line $\Lambda$, and this induces 
a splitting $\mathbb{T}\simeq \Lambda \oplus (TM\otimes\Lambda)$ reproducing the reduction
of the structure group from $P$ to $P^0$. Given such a splitting (\emph{i.e.} given a backgroud Weyl structure), 
choosing another Weyl structure then amounts to choosing a $1$-form $u\in(\mathbb{R}^{n})^*$ since the induced hyperplane is the
kernel of the map: 
$$(\lambda, \xi) \in \Lambda\oplus(TM\otimes\Lambda) \ \longmapsto \ \ell(\lambda) - \ell\otimes u(\xi).$$ 
Equivalently, this defines in $\mathbb{T}^* \simeq (T^*M\otimes\Lambda^*) \oplus \Lambda^*$ the ray whose elements are 
$(-u\otimes\ell,\ell)$, where $\ell$ runs through all positive elements in $\Lambda^*$.

\smallskip

On the background manifold, the torsion-free connections on $TM$
are the same as the set of connections induced by Weyl structures: two Weyl structures differ one from another by a $1$-form, and the relation
beween the Weyl connection  $\nabla^u$ induced by the choice of a $1$-form $u$ and a background Weyl connection $\nabla$ is given by
$$\nabla^u_XY = \nabla_XY + u(X)Y + u(Y) X , \quad X \in TM,\ Y \in \Gamma(TM). $$
The tractor connection has been computed for instance in \cite{beg}. To state the result, one defines a background Weyl structure (\emph {i.e.} a 
connection $\nabla$ in the projective class) and one denotes by $\textsf{P}$ its modified Ricci tensor (or Schouten tensor) \cite[p. 1209]{beg}. 
As explained there, it is
always possible to choose $\nabla$ such that $\textsf{P}$ is symmetric, and we shall indeed do so. It is then a simple task 
to check that a curve $\gamma$ is a parabolic geodesic if it satisfies the equations
$$ \begin{cases} \nabla_{\gamma'}\gamma' + 2 u(\gamma')\gamma' =0 , \\
\nabla_{\gamma'}u - \textsf{P}(\gamma',\cdot) - u(\gamma')u =0 .
\end{cases}$$
The first equation shows that any solution is an (\emph{a priori} non affinely param\-etrized) geodesic for $\nabla$, and it is again easy to check 
that one may always choose the parametrization such that the second equation is satisfied. This substantiate our previous claim that the set of parabolic
geodesics here is exactly the same as the set of \emph{great circles}, or common geodesics to all connections in the projective class. 

\smallskip

\subsection{Parabolic geodesics in strictly pseudo-convex CR geometry}
The situation in the complex case of positive definite signature is very similar to the conformal case: if $n=2m+1$ is the dimension
of the underlying CR manifold, one may take the semi-simple group $G=SU(m+1,1)$
(formally, taking this group amounts to choosing a slightly refined description of CR geometry as a parabolic geometry,
since one also assumes existence of a $(m+2)$-root of the CR-canonical bundle) and $P$ and
$P^-$ are the isotropy groups of the complex lines $\mathbb{C}e_0$  and $\mathbb{C}e_{\infty}$ in $\mathbb{C}^{m+2}$, where we fixed
a basis $(e_0, e_1,\ldots, e_m,e_{\infty})$ and the hermitian form given by $\langle e_0,e_0\rangle = 0$, $\langle e_{\infty},e_{\infty}\rangle = 0$,
$\langle e_0,e_{\infty}\rangle =\langle e_{\infty},e_0\rangle = 1$, and  $\langle e_i,e_j\rangle = \delta_{ij}$ (Kronecker symbol)
whenever $(i,j)$ belongs to $\{0,\ldots,m,\infty\}^2 \setminus \{ (0,\infty), (\infty,0)\}$~; as a result, $P\cap P^- = P^0$.

\smallskip

As in the real case, it is easier to work with the tractor vector bundle $\mathbb{T}$, see \cite{gover-graham,mh-mpcps}.
If $(M,H,J)$ is a strictly pseudo-convex CR manifold of dimension $n=2m+1$, the tractor bundle is a rank $(m+2)$ complex vector bundle
endowed with a hermitian form of signature $(m+1,1)$. There is again an exact sequence
$$ 0 \subset  \Lambda \subset \Lambda^{\perp} \subset \mathbb{T} ,$$ 
where $\Lambda$ is an isotropic complex line. Choosing a contact form on $M$ whose kernel is the contact distribution $H$ yields a splitting
$$ \mathbb{T} \simeq_g \mathbf{1} \oplus (H^*M)^{1,0} \oplus \mathbf{1} ,$$
where the symbol $ \mathbf{1}$ now denotes a trivial complex line bundle and $(H^*M)^{1,0}$ is the space of $1$-forms of type $(1,0)$ on the 
contact hyperplane $H$. 

\smallskip

The description of Weyl structures is formally very similar to that given in the conformal case: the Moebius bundle $\mathscr{S}$ is the bundle of 
isotropic complex lines in $\mathbb{T}$, the bundle of Weyl structures is $\mathscr{S}\setminus\{\Lambda\}$, and much of the considerations
given above also apply here \emph{mutatis mutandis}. For instance, once a reference contact form $\theta$ has been fixed, one may describe 
$\mathscr{W}$ as the image of the embedding $\alpha =  \alpha_{|H} + \alpha_0\theta \mapsto (\tfrac12|\alpha_{|H}^{1,0}|^2 + i\alpha_0, \alpha_{|H}^{1,0},1)$
of $T^*M$ in $\mathscr{S}$. This reproduces the standard embedding of the Heisenberg algebra in $G/P^-$ also gives an identification between
the cotangent bundle and the vertical bundle of the Moebius bundle.

\smallskip

Explicit equations for a curve $\gamma$ to be a parabolic geodesic are much longer to obtain in this case than in the previous ones, so we shall skip 
leave the detailed computations to the reader. 
Let us just indicate that explicit forms of the tractor connection can be found in \cite[formula (3.3)]{gover-graham} or in \cite[formula (5.11)]{mh-mpcps}
(both with respect to the Tanaka-Webster connection), thus leading with some effort to the equations asserting that the Weyl structure 
associated to a choice of $1$-form $\alpha$ is tractor parallel along the curve $\gamma$. To get the full system of equations for a parabolic geodesic, one
must complement by the equation ensuring that $\gamma$ is the geodesic for the Weyl structure defined by the $1$-form $\alpha$, and this can be
calculated by using for instance \cite[\S 4]{dmjc-ricci} which easily leads to explicit formulas for Weyl covariant derivative on the tangent bundle.

\smallskip

\bibliographystyle{smfplain}

\end{document}